\documentclass[12pt,oneside]{amsart}
\usepackage{amsmath}
\usepackage{amsthm}
\usepackage{amsfonts}
\usepackage{amssymb}
\usepackage{enumerate}
\usepackage{comment}
\usepackage{xcolor}
\usepackage{soul}
\usepackage[normalem]{ulem}
\usepackage{hyperref}
\usepackage{cancel}
\usepackage{tikz-cd}
\usepackage{mathrsfs}
\usepackage[colorinlistoftodos,prependcaption]{todonotes}
\newtheorem{theorem}{Theorem}
\newtheorem{lemma}[theorem]{Lemma}

\newtheorem{corollary}[theorem]{Corollary}

\newtheorem{conjecture}[theorem]{Conjecture}
\newtheorem*{conjecture*}{Conjecture}

\usepackage[T1]{fontenc}
\usepackage[utf8]{inputenc}

\theoremstyle{definition}

\theoremstyle{remark}
\newtheorem{remark}[theorem]{Remark}
%


\newcommand{\DD}{{\mathbb D}}
\newcommand{\OO}{{\mathcal O}}

\newcommand{\FF}{{\mathcal F}}

\newcommand{\RR}{{\mathbb R}}
\newcommand{\CC}{{\mathbb C}}
\newcommand{\cC}{{\mathcal C}}

\newcommand{\Dab}{D_{a,b}}
\newcommand{\co}[1]{\overline{#1}}


\renewcommand{\phi}{\varphi}

\hyphenation{plu-ri-po-lar} \hyphenation{pluri-sub-harmonic} 
\hyphenation{Cara-théo-dory}
\newcommand{\abs}[1]{\lvert#1\rvert}

\subjclass[2020]{32F45}


\begin{document}

\title[Rigidity of Kobayashi isometries]{Rigidity of Kobayashi isometries of a class of 2-dimensional Lempert manifolds}

\address{Doctoral School of Exact and Natural Sciences, Institute of Mathematics, Faculty of Mathematics and Computer Science, Jagiellonian
University,  \L ojasiewicza 6, 30-348 Krak\'ow, Poland}

\author{Anand Chavan}\email{anand.chavan@doctoral.uj.edu.pl}

\thanks{Supported by the Preludium bis grant no. 2021/43/O/ST1/02111 of the National Science Centre,
Poland.}

\keywords{Isometries with respect to invariant distances and metrics, Carathéodory universal set}
\begin{abstract}
   In this article, we study the Kobayashi isometries of 2-dimensional complex manifolds having a finite Carathéodory universal set. In particular, we prove that the Kobayashi isometries of these complex manifolds are (anti)holomorphic.
\end{abstract}
\maketitle
\section{Introduction}
\subsection{Motivation and Results} In the theory of invariant distances and metrics it is an important problem to know, for which complex manifolds an isometry map with respect to their holomorphically invariant distances/metrics, is holomorphic. A simple example of the unit disc shows that every isometry map from the unit disc to itself with respect to the (Poincaré) invariant distance is a holomorphic automorphism of the unit disc or its conjugate, which is indeed (anti)holomorphic (\cite{Jar-Pfl 2013} Proposition 1.1.20). In the case of the bidisc we can find an isometry map from the bidisc to itself with respect to the Kobayashi distance which is not (anti)holomorphic (e.g. $\DD^2\ni(z,w)\mapsto(z,\co{w})$).

The problem of (anti)holomorphic rigidity of the isometry map between complex manifolds has been widely studied. It was first considered in \cite{Kuc-Ray 1988} for the product of $n$-unit Hilbert balls over the Carathéodory metric and subsequently in \cite{Zwo 1993, Zwo 1995} for the Cartesian product of the unit Euclidean ball with respect to the Carathéodory metric or distance.

In the case of the Kobayashi distance in \cite{Gau-Ses 2013}, the (anti)holomorphicity of Kobayashi distance isometries were proved between strongly convex bounded domains of class $\cC^3$ in complex Euclidean space. Additionally, in \cite{Gau-Ses 2017}, the following conjecture was formulated, which remains open to our knowledge (see also \cite{Mah 2012, Bas 2022, Kim-Seo 2022, Bas-Cor 2025}). Although not explicitly formulated in the conjecture, one should consider $M ,M'$ to be equidimensional. 
\begin{conjecture}\label{conj_1}
    Let $M$ and $M'$ be Kobayashi hyperbolic complex manifolds. Let $f:(M,K_M)\to (M',K_{M'})$ be a Kobayashi distance isometry. If $M$ and $M'$ are not biholomorphic to a product of complex manifolds, then $f$ is holomorphic or antiholomorphic.
\end{conjecture}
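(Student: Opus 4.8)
The plan is to reduce the global rigidity statement to a pointwise (infinitesimal) one and then integrate, using complex geodesics as the bridge. Throughout I would work under the tautness/completeness hypotheses needed for $K_M$ to be the integrated form of the infinitesimal Kobayashi metric $\kappa_M$ and for extremal discs to exist. The first step is to promote the a priori merely continuous isometry $f$ to a diffeomorphism. The tool is the metric characterization of complex geodesics: a holomorphic $\varphi\colon(\DD,K_\DD)\to(M,K_M)$ is extremal exactly when it is an isometric embedding of the Poincar\'e disc. Since $f$ preserves $K$, it sends every isometric disc to an isometric disc; and an isometric embedding of $(\DD,K_\DD)$ into a taut manifold is, up to an automorphism or an anti-automorphism of $\DD$, holomorphic or antiholomorphic. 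Thus $f$ carries the family of extremal discs of $M$ to that of $M'$, restricting on each to a M\"obius or anti-M\"obius map, and smooth dependence of the extremal discs on their initial data would yield $f\in\cC^1$ (indeed real-analytic).

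Next comes the infinitesimal rigidity. Differentiating the identity $K_{M'}(f(p),f(q))=K_M(p,q)$ shows that $df_p\colon T_pM\to T_{f(p)}M'$ is a real-linear isometry of the infinitesimal metrics, hence maps the indicatrix $I_p=\{v:\kappa_M(p,v)<1\}$ onto $I_{f(p)}$. Each indicatrix is a balanced domain (and, in the Lempert setting, convex). The crux is to show that a real-linear isomorphism carrying one indicatrix onto another must be complex-linear or conjugate-linear, \emph{unless} the indicatrix decomposes as a direct product of balanced factors. This is precisely where the non-product hypothesis enters: by Cartan's linearity theorem the automorphisms of a bounded balanced domain are linear, and the only identifications that genuinely mix holomorphic with antiholomorphic directions come from applying conjugation independently on product factors. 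Excluding these is the infinitesimal shadow of $M$ and $M'$ not being products.

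Then I would globalize. At each $p$ the map $df_p$ is complex-linear or conjugate-linear, and these two conditions are disjoint (a nonzero real-linear isomorphism in complex dimension $\ge 1$ cannot be both) and each is closed once $f\in\cC^1$; by connectedness of $M$ a single choice holds everywhere. If $df_p$ is complex-linear for all $p$ then $f$ satisfies the Cauchy--Riemann equations and is holomorphic, and in the conjugate case $f$ is antiholomorphic.

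The hard part, and the reason the conjecture is still open, is that the structural inputs above are unavailable for a general Kobayashi hyperbolic manifold. Lempert's theorem supplies existence, uniqueness, abundance and smooth dependence of complex geodesics only under (strong) convexity; without it the extremal discs may be too sparse to recover $f$, so the regularity step and the reconstruction of $df_p$ both break down. Equally resistant is upgrading a pointwise product splitting of the indicatrices to an honest biholomorphic product decomposition of $M$ itself---the tangent indicatrix can factor at every point without $M$ being a product. Resolving these two gaps, regularity of isometries of arbitrary hyperbolic manifolds and integrability of the pointwise product structure, is the essential obstacle; it is what forces one to specialize, as in the present paper, to two-dimensional Lempert manifolds with a finite Carath\'eodory universal set, where extremal discs are controlled and the product ambiguity can be ruled out directly.
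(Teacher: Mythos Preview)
The statement you are attempting is Conjecture~1, which the paper explicitly records as open; there is no proof in the paper to compare against. Theorem~2 establishes only the special case of two-dimensional Lempert manifolds whose Carath\'eodory universal set is finite with at least three elements, and even there under an a priori $\cC^1$ hypothesis on the isometry.

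You correctly flag two structural obstacles---regularity of the isometry outside the Lempert regime, and the passage from a pointwise product splitting of indicatrices to a global product decomposition of $M$---and these are indeed among the reasons the conjecture is open. But there is a further genuine error in the infinitesimal step that you do \emph{not} acknowledge. You assert that a real-linear isomorphism carrying one balanced indicatrix onto another must be $\CC$-linear or conjugate-linear unless the indicatrix splits as a direct product. This is false already for the Euclidean unit ball in $\CC^2$: the map $(z_1,z_2)\mapsto(z_1,\bar z_2)$ is a real-linear isometry of the ball which is neither $\CC$-linear nor conjugate-linear, and the ball is certainly not a product. Cartan's linearity theorem, which you invoke, concerns \emph{holomorphic} automorphisms of bounded circular domains and says nothing about real-linear isometries of the Minkowski functional. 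So even in cases where the conjecture is known (strongly convex domains, by Edigarian), the mechanism cannot be the indicatrix dichotomy you describe; one must instead exploit the global geometry of complex geodesics and their left inverses.

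For contrast, the paper's proof of Theorem~2 succeeds precisely because its hypothesis forces the closed indicatrix to be an intersection of at least three cylinders $\{|L_i(X)|\le 1\}$, whose boundary faces have linear part equal to the \emph{complex} lines $\ker L_i$ (Lemma~6). A real-linear isometry of the indicatrices must permute these faces and hence these complex lines, and Lemma~5 shows that a real-linear map of $\CC^2$ sending three distinct complex lines to three distinct complex lines is necessarily $\CC$-linear or conjugate-linear. This face/complex-line mechanism is what replaces your failed non-product dichotomy, and it is available only under the finiteness assumption on the Carath\'eodory universal set.
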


The most comprehensive result concerning the Kobayashi distance in complex Euclidean bounded domains was established by Edigarian \cite{Edi 2019}, inspired by \cite{Ant 2017}, who proved that the Kobayashi distance isometries between bounded strictly convex domains are (anti)holomorphic. Recent works \cite{Cha-Zwo 2024,Edi 2024} proved the (anti)holomorphicity of Kobayashi metric isometry for the domain diamond and the symmetrized bidisc. The domain diamond is not strictly convex, and the symmetrized bidisc is not biholomorphic to a convex domain.
\subsection{Lempert Theory, C-sets and Carathéodory universal sets.}
For the necessary definitions, see Section \ref{2}. Lempert theory is an effective tool for understanding the isometry problem \cite{Edi 2019, Cha-Zwo 2024, Edi 2024}. Lempert's theorem 
\cite{Lem 1981} states that for a convex or strongly linearly convex domain, the Carathéodory and Kobayashi distances or metrics are equal, viz., there is a unique holomorphically invariant metric or distance on these domains with the distance/metric decreasing property.

In \cite{Kos-Zwo 2021} while studying the norm-preserving extension problem on subsets of the tridisc the authors considered C-sets $\mathcal{M}_{\alpha}$, 
$$\mathcal{M_{\alpha}}:=\{(z_1,z_2,z_3)\in \DD^3:\alpha_1z_1+\alpha_2z_2+\alpha_3z_3=\co{\alpha_1}z_2z_3+\co{\alpha_2}z_1z_3+\co{\alpha_3}z_1z_2\}$$ where $\alpha=(\alpha_1,\alpha_2,\alpha_3)\in \CC^3$
and discovered that the special domain $D_{a,b}\subset \CC^2$ (a domain in $\CC^2$ biholomorphic to $\mathcal{M}_{(a,b,1)}$, where $|a|,|b|,1$ forms sides of a triangle) though not linearly convex admits the equality of these two distances or metrics, adding $D_{a,b}$ to the list of domains where the conclusion of Lempert theorem holds. Moreover, the Carathéodory universal set of $\Dab$ consists of 3 elements, namely $F_{1}(z_1,z_2)=z_1$, $F_{2}(z_1,z_2)=z_2$ and $F_{3}(z_1,z_2)=\frac{az_1+bz_2-z_1z_2}{\co{b}z_2+\co{a}z_{1}-1}$. The other known example of finite number of elements in Carathéodory universal set was for bidisc which has 2 elements, the two coordinate projection function.

In \cite{Agl-Lyk-You 2019}, domains were characterized up to biholomorphism by the cardinality of their Carathéodory universal set. In dimension one, a Lempert domain with a Carathéodory universal set consisting of a single element is biholomorphic to the unit disc, and in this case the Kobayashi isometries are (anti)holomorphically rigid. In dimension two, a Lempert domain whose Carathéodory universal set consists of two elements is biholomorphic to the bidisc, which does not exhibit the (anti)holomorphic rigidity of Kobayashi isometries.

In this article, we prove that a two-dimensional Lempert manifold whose Carathéodory universal set is finite but contains more than two elements exhibits the (anti)holomorphic rigidity phenomenon, in sharp contrast to the bidisc. The following is the main theorem.

\begin{theorem}
    Let $M,N$ be 2-dimensional Lempert manifolds such that their Carathéodory universal set is finite but has at least three elements. Let $F:M\to N$ be a $\cC^1$-smooth Kobayashi metric isometry. Then $F$ is (anti)holomorphic.    
\end{theorem}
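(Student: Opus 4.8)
The plan is to show that the differential $dF_p\colon T_pM\to T_{F(p)}N$ is $\CC$-linear at every $p\in M$, or $\CC$-antilinear at every $p\in M$, and then to conclude. Since $M$ and $N$ are Lempert, $F$ is in particular a Carathéodory isometry, and $\cC^1$-smoothness upgrades this to an infinitesimal isometry, $C_N(F(p);dF_pX)=C_M(p;X)$ for all $p,X$; in particular $dF_p$ is injective, hence a linear isomorphism, and $F$ is a $\cC^1$ diffeomorphism. Let $\uU_M=\{f_1,\dots,f_m\}$ and $\uU_N=\{g_1,\dots,g_n\}$ be the finite (minimal) Carathéodory universal sets, $m,n\ge 3$. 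The maps $\Phi=(f_1,\dots,f_m)\colon M\hookrightarrow\DD^m$ and $\Psi=(g_1,\dots,g_n)\colon N\hookrightarrow\DD^n$ are holomorphic embeddings; near any point $\Psi(N)$ is a $2$-dimensional complex submanifold defined by $n-2\ge 1$ holomorphic relations among its coordinates, and since $\Psi$ is an embedding $\bigcap_j\ker(dg_j)_q=0$ at every $q\in N$. For $g\in\uU_N$ put $h:=g\circ F\colon M\to\DD$; then $h$ is $\cC^1$ and, as $g$ decreases Carathéodory distance and $F$ is an isometry, $h$ decreases it into the Poincaré disc: $|dh_pX|\le(1-|h(p)|^2)\,C_M(p;X)$, with equality exactly when $dF_pX$ lies in the complex cone of directions at $F(p)$ on which $g$ is $C_N$-extremal.

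The key step is a pointwise linearization lemma valid at every $p$. By minimality of $\uU_N$ and positivity of $C_N$, every $g_j$ is $C_N$-extremal on a cone at $F(p)$ with nonempty interior; likewise every $f_i$ is $C_M$-extremal on such a cone at $p$. The $dF_p^{-1}$-images of the former and the latter are two finite closed covers of the unit sphere of $T_pM$, so their common refinement has a member with nonempty interior: there are indices $j$, $i$ and a nonempty open cone $\mathcal C\subset T_pM$ on which simultaneously $g_j$ is $C_N$-extremal in the pushed-forward direction and $C_M(p;X)=|L(X)|$, where $L:=(df_i)_p/(1-|f_i(p)|^2)$ is $\CC$-linear and (as $C_M>0$) nonvanishing on $\mathcal C$. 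Writing $dh_p=A+B$ with $A$ $\CC$-linear and $B$ $\CC$-antilinear, on $\mathcal C$ we get $|A(X)+B(X)|=(1-|h(p)|^2)|L(X)|$. Restricting to a complex line $\CC v$ meeting $\mathcal C$ and setting $X=re^{i\theta}v$, this becomes $|a+e^{-2i\theta}b|=\text{const}$ for $\theta$ in an open interval, with $a,b$ proportional to $A(v),B(v)$; squaring yields $2\re(e^{2i\theta}a\bar b)=\text{const}$ on that interval, and since this sinusoid is nonconstant unless $a\bar b=0$, we get $A(v)=0$ or $B(v)=0$. Thus on the open set of complex lines meeting $\mathcal C$, $A$ or $B$ vanishes; since a nonzero $\CC$-linear or $\CC$-antilinear functional on $T_pM\cong\CC^2$ vanishes on at most one complex line, necessarily $A\equiv 0$ or $B\equiv 0$, so $g_j\circ F$ is holomorphic or antiholomorphic at $p$. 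The same argument applies with $g_j$ replaced by any element of $\uU_N$ with thick extremality cone — which, by minimality, is every $g_j$, since its $dF_p^{-1}$-image necessarily meets the interior of some $f_i$-cone.

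Finally I would pass from these assertions about the $g_j\circ F$ to $F$ itself, which is where $n\ge 3$ is used and which I expect to be the main obstacle. Write $dF_p=\mathcal A+\mathcal B$, $\mathcal A$ $\CC$-linear, $\mathcal B$ $\CC$-antilinear; the antiholomorphic part of $g_j\circ F$ at $p$ is $(dg_j)_{F(p)}\circ\mathcal B$, so $g_j\circ F$ is holomorphic at $p$ iff $\operatorname{im}\mathcal B\subseteq\ker(dg_j)_{F(p)}$ and antiholomorphic at $p$ iff $\operatorname{im}\mathcal A\subseteq\ker(dg_j)_{F(p)}$. Outside a proper analytic subset of $N$ all $dg_j\neq 0$ and, by minimality of $\uU_N$, the lines $\ker(dg_j)$ are pairwise distinct (two coincident kernels on an open set would make two elements of $\uU_N$ Möbius-equivalent); it is precisely this over-determination by a third coordinate function that separates the present situation from the bidisc, where $n=2$, there is no relation, and the two coordinates may be conjugated independently. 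At such a point $p$: if $\rank_\CC\mathcal B=2$ then no $g_j\circ F$ is holomorphic at $p$, so all are antiholomorphic, whence $\operatorname{im}\mathcal A\subseteq\bigcap_j\ker(dg_j)=0$ and $F$ is antiholomorphic at $p$; if $\rank_\CC\mathcal B=0$ then $F$ is holomorphic at $p$; and $\rank_\CC\mathcal B=1$ cannot occur, since then at most one $g_j\circ F$ is holomorphic at $p$, so at least $n-1\ge 2$ of them are antiholomorphic, forcing again $\mathcal A=0$ and contradicting invertibility of $dF_p$. Thus the closed sets $\{\bar\partial F=0\}$ and $\{\partial F=0\}$ cover $M$ off a nowhere dense set, hence (being closed) cover $M$; they are disjoint because $dF_p$ is invertible; and $M$ is connected, so one of them is all of $M$ and $F$ is holomorphic or antiholomorphic. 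The delicate point, most likely to require real care, is the extraction — from the holomorphic relations among the universal coordinates, which exist exactly because $n\ge 3$ — of the pairwise distinctness of the kernels $\ker(dg_j)$ and the incompatibility of a genuinely mixed $dF_p$ with these relations; this is the precise place where the argument available for the bidisc breaks down.
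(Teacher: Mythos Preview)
Your approach is genuinely different from the paper's. The paper works directly with the convex geometry of the indicatrix: at each $p$ the closed indicatrix $\overline{I_M(p)}=\bigcap_i\{|L_i(X)|\le 1\}$ carries flat faces whose linear parts are the complex lines $\ker L_i$; since $dF_p$ is an $\RR$-linear isometry of these norms it sends the union of (at least three) such lines to the corresponding union on the $N$ side, and an elementary lemma (after a $\CC$-linear change of coordinates sending two of the lines to the axes, an $\RR$-linear circle-preserving self-map of $\CC$ is $z\mapsto\omega z$ or $z\mapsto\omega\bar z$, and the third line rules out the mixed case) forces $dF_p$ to be $\CC$-(anti)linear in one stroke. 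Your route instead proves, via the sinusoid identity $|a+e^{-2i\theta}b|=\text{const}$, that each $g_j\circ F$ has $\CC$-(anti)linear differential at $p$, and then runs a rank trichotomy on the pieces $\mathcal A,\mathcal B$ using at least three $g_j$ with pairwise distinct kernels. What the paper's route buys is that the three-lines lemma yields $\CC$-(anti)linearity of $dF_p$ directly, without decomposing into $\mathcal A+\mathcal B$ or analysing the $g_j\circ F$ individually; what your route buys is a more elementary globalization (the disjoint closed sets $\{\partial F=0\}$ and $\{\bar\partial F=0\}$ cover a dense open set, hence all of the connected $M$), whereas the paper invokes a Rad\'o-type lemma.

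The genuine gap in your argument is the passage from global minimality of $\uU_N$ to the pointwise statement you need: that at every $q=F(p)$ each $g_j$ has an extremality cone with nonempty interior, nonzero differential, and kernel distinct from the other kernels. Minimality only guarantees that for each $j$ there is \emph{some} point and direction at which $g_j$ is strictly needed; at an arbitrary $q$ one of the $g_j$ could be redundant, its cone could be thin, or two kernels could coincide. Without this your sinusoid step yields the holo/antiholo dichotomy only for those $g_j$ with thick cone at $q$, and the rank-one case of your trichotomy may then lack the third function it requires. Your M\"obius-equivalence sketch for distinct kernels is suggestive but not a proof (equal kernels on an open set gives $g_j=\phi\circ g_i$ only locally, and one must still connect this to global minimality). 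To be fair, the paper's proof also asserts without argument that the indicatrix at each point has at least three faces, so this pointwise-versus-global issue is present in both approaches; but the paper's face formulation makes the needed input explicit and, once granted, dispatches the pointwise $\CC$-(anti)linearity more cleanly than the rank trichotomy.
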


As a consequence, we have the (anti)holomorphic rigidity of Kobayashi isometry for the domain $\Dab$ whose Carathéodory universal set consists of 3 elements.
\begin{corollary}
    Let $f:D_{a,b} \to D_{c,d}$ be a $\cC^1$-smooth Kobayashi metric isometry where $(a,b,1),(c,d,1)\in \CC^3$ satisfy the triangle inequality. Then $f$ is (anti)holomorphic. 
\end{corollary}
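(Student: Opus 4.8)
The plan is to obtain the corollary as a direct specialization of the main theorem, so the only real work is to check that $\Dab$ and $D_{c,d}$ satisfy its hypotheses. By \cite{Kos-Zwo 2021}, the assumption that $|a|,|b|,1$ are the side lengths of a (non-degenerate) triangle is precisely what makes $\Dab\subset\CC^2$ a Lempert domain, i.e.\ the Carathéodory and Kobayashi distances (and metrics) coincide on it; in particular $\Dab$ is a $2$-dimensional Lempert manifold. Moreover, as recalled in the introduction, the same reference shows that the Carathéodory universal set of $\Dab$ consists exactly of the three maps $F_1(z_1,z_2)=z_1$, $F_2(z_1,z_2)=z_2$ and $F_3(z_1,z_2)=\frac{az_1+bz_2-z_1z_2}{\co{b}z_2+\co{a}z_1-1}$, hence is finite and has exactly three elements.

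Since $(c,d,1)$ is likewise assumed to satisfy the triangle inequality, the identical description applies to $D_{c,d}$: it too is a $2$-dimensional Lempert manifold whose Carathéodory universal set has cardinality three. Taking $M=\Dab$ and $N=D_{c,d}$, all hypotheses of the main theorem are therefore met, and since $f$ is a $\cC^1$-smooth Kobayashi isometry, the theorem gives that $f$ is holomorphic or antiholomorphic.

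I do not expect a genuine obstacle here — the corollary is simply an instance of the main theorem once the structural facts about $\Dab$ established in \cite{Kos-Zwo 2021} are quoted. The only point worth emphasizing is that the Carathéodory universal set of $\Dab$ has exactly three elements: this is what places $\Dab$ within the scope of the theorem and, at the same time, distinguishes it from the bidisc, whose analogous set has only two elements and for which the rigidity conclusion is known to fail.
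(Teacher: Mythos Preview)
Your proposal is correct and matches the paper's approach exactly: the paper presents the corollary as an immediate consequence of the main theorem, quoting from \cite{Kos-Zwo 2021} that $\Dab$ (and hence $D_{c,d}$) is a $2$-dimensional Lempert domain whose Carathéodory universal set has exactly three elements. No separate proof is given in the paper beyond this, so your verification of the hypotheses is all that is needed.
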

Above $D_{a,b}=\{z\in \DD^2:|az_1+bz_2-z_1z_2|<|\co{b}z_1+\co{a}z_2-1|\}$. For $(\alpha_1,\alpha_2,\alpha_3)\in \CC^3$ satisfying the triangle inequality, we mean that $|\alpha_1|,|\alpha_2|,|\alpha_3|$ forms the sides of a triangle.

The plan of the paper is to introduce necessary notions in further sections and to develop the proof of this main theorem. The core idea of the proof is to show that the (real)Fr\'echet derivative of the isometry map at a point is (anti)$\CC$-linear and further extends this (anti)$\CC$-linearity for all points of the domain using an argument based on Rado's theorem due to Edigarian \cite{Edi 2019}. 

\begin{remark}
    Regarding Theorem 2, at present we know of only one nontrivial example of a two-dimensional manifold whose Carathéodory universal set consists of three elements. We expect, however, that further two-dimensional manifolds with a finite Carathéodory universal set containing more than two elements do exist, and in such cases Theorem 2 would apply in a much broader range of situations.
\end{remark}
\section{Preliminaries\label{2}}

In this section, we will introduce necessary definitions, give some important results that are true in much more generality, and develop necessary tools to prove the main results. 

\subsection{Invariant distance, indicatrix, affine maps}
 We will recall the definition of the Kobayashi and Carathéodory invariant (pseudo)distance and metric. 

Let $M$ be a complex manifold, consider $z,w \in M$ and $(p;X)\in M \times T_{p}M $. 
    {\it The Carath\'eodory (pseudo)distance} is defined as follows 
$$c_M(z,w):= \sup \{\rho(F(z),F(w)): F \in \mathcal{O}(M,\mathbb{D})\}$$
    where $\rho$ is the Poincar\'e distance on the unit disc $\mathbb D\subset\mathbb C$. 

    {\it The Lempert function} is defined as
    $$l_M(z,w):=\inf\{\rho(\sigma, \zeta): f(\sigma)=z \text{ and } f(\zeta)=w \text{ where }f\in\mathcal{O}(\mathbb{D},M)\}.$$

    {\it The Kobayashi (pseudo)distance $k_M$} is the largest (pseudo)distance not greater than the Lempert function. We have inequalities $c_M\leq k_M\leq l_M$.

    {\it The Kobayashi (pseudo)metric} is defined as
    $$\varkappa_{M}(p;X):=\inf\{|\alpha|: \alpha f'(0)=X \text{ for } f \in \mathcal{O}({\mathbb{D},M})\text{ and } f(0)=p\}.$$
    
    In addition to the above, we define {\it the Caratho\'edory (pseudo)metric} as
    $$\gamma_M(p;X):=\sup\{|F'(p) X|: F\in \mathcal{O}(M,\mathbb{D}) \text{ and } F(p)=0  \}.$$

    Recall that $\gamma_M\leq \varkappa_M$.
    
 We call the complex manifold $M$ {\it Lempert manifold} \cite{Lem 1981} if $M$ is taut and the conclusion of the Lempert Theorem holds on $M$, i.e. if we have the equalities
 $$l_M\equiv k_M\equiv c_M,\; \varkappa_M\equiv \gamma_M.$$
 
The Kobayashi {\it Indicatrix} of a manifold $M$ at a point $p$ is defined as: $$I_{M}(p):=\{X\in T_{p}M:\varkappa_{M}(p;X)<1\}$$ It is important to note that for Lempert manifolds the Kobayashi indicatrix is a convex and balanced, moreover $\varkappa_{M}(p;X)$ is a norm on $T_{p}M$ for $p\in M$ \cite{Jar-Pfl 2013}.

A map $F:M\to N$ where $M,N$ are Lempert manifolds of dimension $m,n$ respectively, is a {\it Kobayashi distance isometry} if $$k_{M}(z,w)=k_{N}(F(z),F(w))$$ for all $z,w\in M$.

A $\cC^1$ smooth map $F:M\to N$ where $M,N$ are Lempert manifolds of dimension $m,n$ respectively, is a {\it Kobayashi metric isometry} if $$\varkappa_{M}(z;X)=\varkappa_{N}(F(z);F'(z)X)$$ for all $z\in M$ and $X \in T_{p}M$. Consequently, $F$ will also be a Kobayashi distance isometry, since the Kobayashi distance is the integrated distance of the Kobayashi metric.

If $F$ is a $\cC^1$ smooth Carathéodory distance isometry between complex manifolds, then $F$ is also a Carathéodory metric isometry. This is due to the fact that $\gamma_M$ is obtained by differentiating $c_M$ (see section 4.3 \cite{Jar-Pfl 2013} for precise meaning).  Moreover, on a Lempert manifold, the Carathéodory distance and metric coincide with the Kobayashi distance and metric, respectively. Therefore, any $\mathcal{C}^1$ smooth Kobayashi distance isometry between Lempert manifolds is also a Kobayashi metric isometry.

When we identify tangent space with complex Euclidean space of appropriate dimensions, $F'(z)$ denotes the real Fr\'echet derivative, that is, $F'(z)$ is an $\RR$-linear map, $F'(z):\CC^{n}(=\RR^{2n})\to \CC^{m}(=\RR^{2m})$.

An $\RR$-linear isomorphism from $\CC^n$ to $\CC^n$ i.e. from $\RR^{2n}$ to $\RR^{2n}$, need not be $\CC$-linear. $\RR$-linear isomorphism $T$ is $\CC$-linear isomorphism iff $$[T][J]=[J][T]$$
where $[T]$ is the matrix representation of the $\RR$-linear map $T$ in the standard real basis and  
$[J] =\begin{bmatrix}
0 & -I \\
I & 0
\end{bmatrix}$, $I$ is the $n\times n$ real identity matrix and is anti $\CC$-linear iff $$[T][J]=-[J][T]$$
In the following lemmas we will see when can an $\RR$ linear isomorphism be a $\CC$ linear isomorphism. 

Before we begin let us introduce some notation. $\partial\DD_{r_i}$ denotes circle of radius $r_{i}>0$ centered at origin in the complex plane. We identify complex coordinates as real coordinates and vice versa as follows: for $z\in \CC^n$, $z=(z_{1},\ldots,z_{n})\leftrightarrow(x_{1},y_{1},\ldots x_{n},y_{n})\in \RR^{2n}$ where $z_{i}=x_i+iy_i$, for $i \in\{1,\dots,n\}$.    
\begin{lemma}\label{4}
    If $A:\CC \to \CC$ is an $\RR$-linear map such that $A(\partial\DD_{r_1})=\partial\DD_{r_2}$ for some $r_1,r_2>0$, then $A(z)=\frac{r_2}{r_1}\omega z$ or $A(z)=\frac{r_2}{r_1}\omega \co{z}$ where $\omega$ is a uni-modular constant.
\end{lemma}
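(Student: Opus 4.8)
The plan is to use the elementary normal form of an $\RR$-linear self-map of $\CC$: every such $A$ can be written uniquely as $A(z)=\alpha z+\beta\co{z}$ with $\alpha,\beta\in\CC$, and then to recover $\alpha$ and $\beta$ from the circle condition. To justify the normal form, put $u:=A(1)$ and $v:=A(i)$; $\RR$-linearity forces $\alpha+\beta=u$ and $i(\alpha-\beta)=v$, a system that determines $\alpha,\beta$ uniquely, while conversely any choice of $\alpha,\beta$ gives an $\RR$-linear map. Hence nothing is lost in this parametrization.

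Next I would substitute a generic point $z=r_1 e^{i\theta}$ of $\partial\DD_{r_1}$ and expand
\[
|A(z)|^{2}=r_1^{2}\,\bigl|\alpha e^{i\theta}+\beta e^{-i\theta}\bigr|^{2}
=r_1^{2}\Bigl(|\alpha|^{2}+|\beta|^{2}+2\re\!\bigl(\alpha\co{\beta}\,e^{2i\theta}\bigr)\Bigr).
\]
The hypothesis $A(\partial\DD_{r_1})=\partial\DD_{r_2}$ implies in particular that the left-hand side equals $r_2^{2}$ for every $\theta\in\RR$. Since $\theta\mapsto\re(\alpha\co{\beta}\,e^{2i\theta})$ is constant only when $\alpha\co{\beta}=0$, we conclude $\alpha=0$ or $\beta=0$; both cannot vanish, as otherwise $A\equiv 0$ could not map onto the circle $\partial\DD_{r_2}$ of positive radius.

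It then remains to treat the two cases. If $\beta=0$ then $A(z)=\alpha z$, and $|A(z)|=r_2$ on $|z|=r_1$ gives $|\alpha|=r_2/r_1$, so $\alpha=\tfrac{r_2}{r_1}\omega$ with $|\omega|=1$; surjectivity onto $\partial\DD_{r_2}$ is then automatic, since $\theta\mapsto\alpha r_1 e^{i\theta}$ already sweeps out the whole circle. The case $\alpha=0$ is symmetric and yields $A(z)=\tfrac{r_2}{r_1}\omega\co{z}$ with $|\omega|=1$. This exhausts all possibilities and proves the lemma.

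I do not expect a genuine obstacle here: the argument is a short Fourier-type coefficient comparison. The only points deserving an explicit word are that the representation $A(z)=\alpha z+\beta\co{z}$ is available for an \emph{arbitrary} $\RR$-linear map, and that the circle condition already forces $A$ to be an $\RR$-linear isomorphism, so that no separate nondegeneracy assumption is needed.
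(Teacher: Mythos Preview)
Your proof is correct and follows essentially the same approach as the paper: write $A(z)=\alpha z+\beta\co{z}$, impose $|A(r_1e^{i\theta})|^2=r_2^2$ for all $\theta$, and read off $\alpha\co{\beta}=0$. Your version is actually more complete, since you justify the normal form, rule out $\alpha=\beta=0$, and explicitly extract the modulus $r_2/r_1$ in each case, whereas the paper stops at ``$a=0$ or $b=0$''.
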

\begin{proof}
    $A:\RR^2\to\RR^2$ is an $\RR$-linear map, i.e $A(x,y)=(a_{1}x+a_{2}y,b_{1}x+b_2y)$ where $a_{i},b_{i}\in \RR$ for $i=1,2$. We can re-write it in terms of complex coordinates as $A(z)=az+b\co{z}$ for some $a,b\in \CC$. 
    
    Since, $A$ maps $\partial\DD_{r_1}$ to $\partial\DD_{r_{2}}$, we have $$|A(r_1e^{i\theta})|=|ar_1e^{i\theta}+br_1e^{-i\theta}|=r_2$$
    (or)
    $$|a|^2r_1^{2}+|b|^2r_1^2+2r_1^{2}Re(a\co{b}e^{2i\theta})=r_2^2$$
    for all $\theta\in \RR$. This implies either $a=0$ or $b=0$.
\end{proof}
A domain $D\subset \CC^n$ is said to be balanced if for any $z\in D$, $\lambda z\in D$ for $\lambda\in \co{\DD}$. For a balanced domain, one can define \emph{the Minkowski functional} $\mu_D : \mathbb{C}^n \to [0,\infty)$ as follows $\mu_D(z)=\inf\{r>0:z\in rD\}=\sup\{|\lambda|:\lambda z\in D\}$. For more details on Minkowski functional we refer reader to \cite{Jar-Pfl 2013}. The following lemma answers the same question but for $\CC^2$.
\begin{lemma}{}\label{5}
    Let $D_1,D_2\subset \CC^2$ be bounded, balanced domains and their Minkowski functional be $\mu_{1}$ and $\mu_{2}$. Consider $T:\CC^2 \to \CC^2$ to be an $\mathbb R$-linear map such that $\mu_1(X)=\mu_{2}(T(X))$ for all $X \in \CC^2$ and $T(L_{1,1}\cup L_{1,2}\cup L_{1,3})=L_{2,1}\cup L_{2,2}\cup L_{2,3}$ where $\{L_{i,j}\}_{j=1}^{3}$ are three different complex lines for $i=1,2$. Then $T$ is $\CC$-linear or anti $\CC$-linear.  
\end{lemma}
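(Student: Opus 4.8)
The plan is to restrict $T$ to each of the three complex lines (which we take through the origin, as is forced by the balanced hypothesis and by Lemma~\ref{4}), apply Lemma~\ref{4} on each of them, and then glue the three local conclusions into one global statement. First I would note that $T$ is in fact an $\RR$-linear \emph{automorphism} of $\CC^2$: since $\mu_2$ is the Minkowski gauge of a bounded domain, $\mu_2(Y)=0$ forces $Y=0$, so the identity $\mu_1=\mu_2\circ T$ makes $T$ injective, hence bijective. Next I would show each $T(L_{1,j})$ is again one of the lines $L_{2,k}$: it is a real $2$-plane through the origin contained in $L_{2,1}\cup L_{2,2}\cup L_{2,3}$, and writing $T(L_{1,j})=\bigcup_k\bigl(T(L_{1,j})\cap L_{2,k}\bigr)$, if it equalled none of the $L_{2,k}$ then every $T(L_{1,j})\cap L_{2,k}$ would be a proper subspace of it, exhibiting a real plane as a finite union of lines through the origin — impossible. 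As $T$ is injective and the $L_{1,j}$ are distinct, this gives a bijective matching of the two triples of lines.

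The heart of the argument is the one-line analysis. Fix $j$, set $L=L_{1,j}$ and $L'=T(L)$. Because $D_1$ is bounded and balanced, $D_1\cap L$ is a bounded, balanced, open (and nonempty) subset of the complex line $L$, hence a round disc about the origin for the Hermitian metric induced from $\CC^2$; call its radius $r_1$, and let $r_2$ be the radius of the corresponding disc $D_2\cap L'$. Since $T$ carries $D_1$ onto $D_2$ and $L$ onto $L'$, it carries the bounding circle of one disc onto that of the other. Identifying $L$ and $L'$ with $\CC$ by $\CC$-linear isometries turns $T|_{L}$ into an $\RR$-linear map $\CC\to\CC$ taking $\partial\DD_{r_1}$ onto $\partial\DD_{r_2}$, so Lemma~\ref{4} applies and yields $T|_{L}(z)=\tfrac{r_2}{r_1}\omega z$ or $T|_{L}(z)=\tfrac{r_2}{r_1}\omega\,\co{z}$; transporting back, $T|_{L_{1,j}}$ either commutes or anticommutes with the ambient complex structure $J$ (note $JL_{1,j}=L_{1,j}$).

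The step I expect to be the main obstacle is the gluing, because a priori $T$ could commute with $J$ on some lines and anticommute on others; this really can happen with only two lines — e.g. $(z,w)\mapsto(z,\co{w})$ preserves the Minkowski gauge of the bidisc and its two coordinate axes, yet is neither $\CC$-linear nor anti-$\CC$-linear — so the third line must enter. Here the key linear-algebra fact is that two distinct complex lines through the origin span $\CC^2$ over $\RR$ and are each $J$-invariant, so if $TJ-JT$ (respectively $TJ+JT$) vanishes on two such lines it vanishes identically. Hence any two lines on which $T$ commutes with $J$ force $T$ to be globally $\CC$-linear, and any two on which it anticommutes force $T$ to be globally anti-$\CC$-linear; a ``mixed'' assignment is moreover self-contradictory, since two lines of one type would make $T$ globally of that type and then $T$ would be both $\CC$-linear and anti-$\CC$-linear, hence zero, on the third, contradicting injectivity. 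With three lines the pigeonhole principle gives two of the same type, so one of the two global conclusions holds, completing the proof.
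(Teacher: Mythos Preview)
Your proof is correct and follows the same overall strategy as the paper: restrict $T$ to each of the three lines, invoke Lemma~\ref{4} to see that $T|_{L_{1,j}}$ either commutes or anticommutes with $J$, and then use the presence of a \emph{third} line to exclude the mixed case. The execution of the last step differs. The paper normalises by $\CC$-linear changes of coordinates so that two of the lines become the coordinate axes, writes $T$ explicitly on those axes via Lemma~\ref{4}, and then derives a direct contradiction from $T(v),T(iv)\in\CC\cdot w$ when the two axes carry opposite types. You instead argue coordinate-free: two distinct complex lines span $\CC^2$ over $\RR$, so if $TJ-JT$ (resp.\ $TJ+JT$) vanishes on two of them it vanishes identically, and pigeonhole among three lines finishes. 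Your route is slightly cleaner and makes transparent why two lines do not suffice (your bidisc example), while the paper's computation is more hands-on; both are short. One small slip: in your injectivity argument it is the boundedness of $D_1$ (i.e.\ $\mu_1(X)=0\Rightarrow X=0$) that is actually used, not that of $D_2$.
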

\begin{proof}
    $\{L_{1,j}\}_{j=1}^{3}$ and $\{L_{2,j}\}_{j=1}^{3}$ are two sets of three different complex lines. Up to composition with complex linear isomorphisms of $\CC^2$, we may assume that $L_{1,1}=L_{2,1}=\CC \cdot e_1$, $L_{1,2}=L_{2,2}=\CC\cdot e_2$ and $L_{1,3}=\CC \cdot v$, $L_{2,3}= \CC\cdot w$ where $e_1,e_2\in \CC^2$ are standard complex basis vectors of $\CC^2$ and $v=(v_1,v_2),w=(w_1,w_2)\in\CC^2$ such that $v_1,v_2,w_1,w_2$ are all nonzero. As $T$ is a $\RR$ linear map and $T(L_{1,1}\cup L_{1,2}\cup L_{1,3})=L_{2,1}\cup L_{2,2}\cup L_{2,3}$, we may assume (up to composition of $\CC$ linear isomorphism) $T(L_{1,j})=L_{2,j}$, $j=1,2,3$ this makes $T$ an $\RR$ linear isomorphism. In particular, $T(z,0)=(a_1z+a_2\co{z},0)$ and $T(0,z)=(0,b_1z+b_2\co{z})$ where $a_1,a_2,b_1,b_2 \in \CC$

    The domains $D_1, D_2$ are balanced. For $i,j\in\{1,2\}$, $$L_{i,j}\cap D_{i}=\{\lambda e_j\in D_i:\mu_{i}(\lambda e_{j} )\leq 1 \}=\{\lambda e_j\in D_i:|\lambda|\leq 1/\mu_{i}(e_{j} ) \}$$ is a closed disc of radius $1/\mu_{i}(e_{j})$, call it $\co{\DD}_{i,j}$. Since $\mu_1(X)=\mu_{2}(T(X))$, $T$ maps disc $\co{\DD}_{1,1}$ to $\co{\DD}_{2,1}$ and $\co{\DD}_{1,2}$ to $\co{\DD}_{2,2}$ and also their respective boundaries, by Lemma \ref{4}, we have either $a_1=0 \text{ or }a_2=0$ similarly, either $b_1=0 \text{ or }b_2 = 0$. We will show that $T$ is either $\CC$-linear or anti $\CC$-linear.

    To finish the proof, let $T(z_1,0)=(a_2\co{z_1},0)$ and $T(0,z_2)=(0,b_1z_2)$. Since $T(L_{1,3})=L_{2,3}$ we get, in particular, $T(\CC\cdot v)=\CC\cdot w$. $T(v)=(a_2\co{ v_1},b_1 v_2)$, $T(iv)=(-ia_2\co{v_1},ib_1v_2)$ and both belong to $\CC\cdot w$. So, $T(iv)+iT(v)\in \CC\cdot w$. That is $(0,2ib_{2}v_2)\in \CC \cdot w$ which means $b_{2}=0$ or $v_2=0$ is a contradiction.
\end{proof}

\subsection{C-sets and Carathéodory universal sets}
\label{2.2}
Let $M$ be a complex manifold and let $c_{M}$ be its Carathéodory pseudodistance (and $\gamma_{M}$ Carathéodory pseudometric), the
\emph{(infinitesimal) Carathéodory set} or \emph{C-set} in short, is a submanifold $N\subset M$, such that for $z,w\in N$ $$c_{N}(z,w)=c_{M}(z,w)$$ (and $\gamma_{N}(p;X)=\gamma_{M}(p;X)$ for $(p;X)\in N\times T_{p}N$.)

\emph{(Infinitesimal) minimal Carathéodory universal set} $\FF_{M}\subset \OO(M,\DD)$ is the smallest subset of holomorphic maps from $M$ to the unit disc $\DD$, such that for all $z,w\in M$ $$c_{M}(z,w)=\sup_{F\in \FF_{M}}\{\rho(F(z),F(w))\}$$
(and $\gamma_{M}(p;X)=\sup_{F\in\FF_M}\{\gamma_{\DD}(F(p);F'(p)X)\}$ for $(p;X)\in M\times T_{p}M$). Henceforth, the Carathéodory universal set should be considered as a minimal Carathéodory universal set.

Consider a Lempert manifold $M$ with $\FF_M\subset \OO(M,\DD)$ its Carathéodory universal set, since $\varkappa_{M}\equiv \gamma_{M}$ $$\varkappa_{M}(p;X)=\sup_{F\in \mathcal{F}_{M}}\{\gamma_{\mathbb{D}}(F(p);F'(p)X)\}$$
and $\gamma_{\mathbb{D}}(p;X)=\frac{|X|}{1-|p|^2}$. 

When the number of elements in the Carathéodory universal set is finite, we can write the Kobayashi indicatrix of $M$ at a point $p\in M$ as
$$I_{M}(p)=\bigcap_{i}^{\#\FF_M}\{X\in T_{p}M:|L_{i}(X)|<1\}$$ 
where $L_{i}:T_{p}M\to \mathbb{C} $ is a $\mathbb{C}$-linear map, such that for some $F_i\in \FF_{M}$  \[L_{i}(X):=\frac{F'_{i}(p)X}{1-|F_i(p)|^2}\tag{*}\text{\label{*}}\]

For a closed convex set $D\subset \CC^n$, let $p\in \partial D$, a \emph{face} $\mathscr{F}\subsetneq \CC^n$ is the maximal real affine subspace of $\CC^n$ such that $p\in O_{p}\subset  \partial D\cap \mathscr{F} $, where $O_{p}$ is an open subset of $\mathscr{F}$ that contains $p$. By maximal, we mean that if there exists another proper affine subspace, say $V_{0}$ such that $p\in \Tilde{O_p}\subset  \partial D\cap V_0$ then $V_0\subset \mathscr{F}$.

The face is an affine space $\mathscr{F}\subset\CC^n$ so it can be written as $\mathscr{F}=p + \mathcal{L}$ where $\mathcal{L}$ is a linear vector space. We will refer to $\mathcal{L}$ as a \emph{linear face}. It is worth noting that the affine faces (respectively, linear faces) between two compact convex sets are preserved under $\RR$-linear isomorphisms that map the two compact convex sets bijectively.

\begin{lemma}\label{6}
    Let $I:=\bigcap_{i=1}^{k}\{X\in\CC^2:\abs{T_{i}(X)}\leq 1\}$, where $T_{i}:\CC^2\to \CC$ are complex linear maps and  $k\geq3$ is the minimal number that defines $I$. Let $q\in\partial I$ be a boundary point, such that $\abs{T_{i}(q)}=1$ for exactly one $i$, $1\leq i \leq k$ then the faces through $q$ are of the form $\mathscr{F}_{q}=q+Ker(T_{i})$.
\end{lemma}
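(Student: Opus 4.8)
The plan is to analyze the local geometry of $\partial I$ near the point $q$, exploiting that exactly one of the defining inequalities is active there. Fix the index $i$ with $\abs{T_i(q)}=1$, and note that for every other $j\neq i$ we have $\abs{T_j(q)}<1$. First I would argue that in a small neighbourhood $U$ of $q$ in $\CC^2$, the constraints $\abs{T_j(X)}\leq 1$ for $j\neq i$ are strictly satisfied (by continuity, since these are finitely many strict inequalities at $q$). Hence $\partial I\cap U = \{X\in U : \abs{T_i(X)}=1\}$, so near $q$ the boundary of $I$ coincides with the boundary of the single ``slab'' $S_i := \{X : \abs{T_i(X)}\leq 1\}$. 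This reduces the problem to identifying the faces of $S_i$ through $q$.

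Next I would describe $S_i$ concretely. Since $T_i:\CC^2\to\CC$ is a nonzero $\CC$-linear map (nonzero because $I$ is bounded — if some $T_i$ were zero the set would contain a full complex line), its kernel $\Ker(T_i)$ is a complex line, and $S_i = T_i^{-1}(\co{\DD})$ is a ``tube'' over the closed unit disc: writing $\CC^2 = \Ker(T_i)\oplus W$ for any complementary line $W$ on which $T_i$ is an isomorphism onto $\CC$, we have $S_i = \Ker(T_i)\times T_i|_W^{-1}(\co{\DD})$. Its boundary is $\Ker(T_i)\times T_i|_W^{-1}(\partial\DD)$. Thus through each boundary point $q$ the set $\partial S_i$ contains the real-affine subspace $q + \Ker(T_i)$, which is a real $2$-plane (in fact a complex affine line).

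It then remains to prove maximality: no larger real affine subspace $V_0$ with $q\in\tilde O_q\subset\partial S_i\cap V_0$ (open in $V_0$) can exist. Suppose $V_0\supsetneq q+\Ker(T_i)$ with an interior point at $q$ relative to $V_0$; pick a vector $Y\in V_0 - q$ not lying in $\Ker(T_i)$, so $T_i(Y)\neq 0$. Moving from $q$ in the $\pm Y$ directions, the value $T_i(q+tY) = T_i(q) + tT_i(Y)$ traces a nondegenerate real line segment through the boundary point $T_i(q)\in\partial\DD$; since $\partial\DD$ is strictly convex, for one sign of small $t$ we get $\abs{T_i(q+tY)}>1$, i.e.\ $q+tY\notin S_i$, contradicting that a neighbourhood of $q$ in $V_0$ lies in $\partial S_i\subset S_i$. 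Hence $q+\Ker(T_i)$ is maximal, so the face through $q$ is exactly $\mathscr{F}_q = q+\Ker(T_i)$. Because $I$ and $S_i$ agree near $q$, this is also the face of $I$ through $q$, and uniqueness of the face through a boundary point of a convex set gives the claim.

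The main obstacle I anticipate is bookkeeping around the two notions of ``face'' in play: one must check that a real-affine subspace which is locally contained in $\partial I$ near $q$ is automatically contained in $\partial S_i$ near $q$ (immediate once $\partial I = \partial S_i$ locally) and, conversely, that the globally defined set $q+\Ker(T_i)$ genuinely lies in $\partial I$ and not merely in $\partial S_i$ — this requires knowing that a whole neighbourhood of $q$ inside $q+\Ker(T_i)$ still satisfies the other strict constraints, which again follows from the local description above. Everything else is the strict convexity of the disc, used exactly as in Lemma \ref{4}'s spirit.
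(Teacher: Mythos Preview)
Your proof is correct and follows essentially the same route as the paper: both show that any real-affine subspace locally contained in $\partial I$ at $q$ must have its direction vectors in $Ker(T_i)$, using that $|T_i(q+tv)|\leq 1$ for all small $|t|$ forces $T_i(v)=0$. The paper carries out this last step via the explicit expansion $|T_i(q+tv)|^2 = 1 + 2t\,\re\bigl(T_i(q)\co{T_i(v)}\bigr) + t^2|T_i(v)|^2 \leq 1$, whereas you phrase it as strict convexity of $\partial\DD$; your preliminary local reduction to the single slab $S_i$ and your care in checking that $q+Ker(T_i)$ actually meets $\partial I$ in a neighbourhood of $q$ are more explicit than in the paper, which leaves those points tacit.
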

\begin{proof}
    $I$ is a closed convex set. $\mathscr{F}_{q}$ as defined above, is an affine space containing $q$, we need to show that it is maximal. Let $\Tilde{\mathscr{F}}$ be any other (real)affine space passing through $q$. $$q\in \Tilde{O}_{q}\subset  \Tilde{\mathscr{F}} \cap \partial I$$
    where $\Tilde{O}_{q}$ is an open set in $\Tilde{\mathscr{F}}$.
    Let $v$ be a vector in the linear vector space assigned to the affine space $\Tilde{\mathscr{F}}$. We can find $\epsilon>0$ such that $q+tv\in \Tilde{O}_{q}$, $|t|<\epsilon$. We have $|T_{i}(q+tv)|\leq 1$ such that $\abs{T_{i}(q)}=1$, $|t|<\epsilon$.
    $$|T_{i}(q+tv)|^2=1+2tRe(T_{i}(q)\co{T_i(v)})+t^2|T_{i}(v)|^2\leq 1$$
    $$ 2tRe(T_{i}(q)\co{T_i(v)})+t^2|T_{i}(v)|^2\leq 0$$
    for all $|t|<\epsilon$. In particular $T_{i}(v)=0$ , i.e. $v\in Ker(T_{i})$. Consequently, $$\Tilde{\mathscr{F}}\subset q + Ker(T_{i})=\mathscr{F}_{q}$$
\end{proof}

The face $\mathscr{F}_q$ as above, is an affine vector space. The maximal linear vector space contained in $\mathscr{F}_q$ is a complex line as its linear face $\mathcal{L}_{q}$, which is precisely $\mathcal{L}_{q}=ker(T_i)$ such that $\abs{T_{i}(q)}=1$. 

\section{Proof of the main theorem}

Now we move to the proof of Theorem 2.

\begin{proof}
    $F:M\to N$ is a $\cC^1$-smooth Kobayashi metric isometry, $$\varkappa_{M}(p;X)=\varkappa_{N}(F(p);F'(p)X)$$
    Let $\varkappa_{M}(p;\_):=\mu_{p,M}(\_)$ and $\varkappa_{N}(q;\_):=\mu_{q,N}(\_)$, these are norms on $T_{p}M$ and $T_{q}N$, respectively. 
    $$\mu_{p,M}(X)=\mu_{F(p),N}(F'(p)X)$$ $F'(p):T_{p}M\to T_{F(p)}N$ is an $\mathbb{R}$-linear map, preserving norm. Let $X\in cl(I_{M}(p))$ then $F'(p)X\in cl(I_{N}(F(p)))$. In particular, $F'(p)$ maps $\partial cl(I_{M}(p))$  to $\partial cl(I_{N}(F(p)))$ and also it maps the affine faces to affine faces in the boundaries and corresponding linear faces to linear faces. The boundaries of the indicatrix of both manifolds $M,N$ have at least three affine faces and, respectively, three different complex lines in them. $F'(p)$ being an $\RR$ linear isometry, maps these lines. By the previous Lemma \ref{5}, we find that $F'(p)$ is $\CC$-linear or anti $\CC$-linear, viz. for $p\in M$, $F_{z}(p)=0$ or $F_{\co{z}}(p)=0$. From the following Lemma \ref{lemma:C} and its corresponding Corollary \ref{lemma:C^n} we complete the proof.
\end{proof}

To complete the proof of the main theorem, the following theorems and lemma are applied locally, so we can formulate them on the domains of $\CC^n$.

\begin{theorem}[Rado\cite{Rob 1990}, Chapter Q Corollary 6]
    Let $D\subset \CC^n$ be an open set, if $f:D\to \CC^m$ is a continuous function and is holomorphic in the open subset $D\setminus Z\subseteq D$ where $Z$ is the zero set of $f$, then $f$ is necessarily holomorphic throughout $D$.\label{8}
\end{theorem}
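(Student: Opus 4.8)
The plan is to deduce the statement from the removable-singularity theorem for locally bounded holomorphic functions across pluripolar sets (Theorem~\ref{7}), by showing that the zero set $Z:=f^{-1}(0)$ is a closed pluripolar subset of $D$.

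First I would reduce to the case $m=1$. Writing $f=(f_1,\dots,f_m)$, each component $f_j$ is continuous on $D$, and since $Z\subseteq f_j^{-1}(0)$ while $f$ is holomorphic on $D\setminus Z$, the map $f_j$ is holomorphic on $D\setminus f_j^{-1}(0)$. Hence it suffices to treat a single continuous $f\colon D\to\CC$ that is holomorphic off its own zero set; applying the scalar statement to each $f_j$ then gives the vector-valued one. Treating connected components of $D$ separately and discarding the trivial case $f\equiv 0$, I may further assume $D$ connected and $f\not\equiv 0$.

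The heart of the proof is to show that the function $u$ defined by $u:=\log\abs{f}$ on $D\setminus Z$ and $u:=-\infty$ on $Z$ is plurisubharmonic on $D$. Upper semicontinuity follows at once from the continuity of $f$. Since plurisubharmonicity can be tested on complex lines, it suffices to argue in one complex variable via the harmonic-majorant criterion: given a disc $B$ with $\overline{B}\subset D$ and a function $h$ harmonic on $B$ and continuous on $\overline{B}$ with $u\le h$ on $\partial B$, write $h=\re G$ with $G$ holomorphic on $B$ and set $\psi:=fe^{-G}$. Then $\psi$ is continuous on $B$, holomorphic on $\{\psi\ne0\}$, satisfies $\log\abs{\psi}=u-h$ on $B$, and $\limsup_{z\to\zeta}(u(z)-h(z))\le 0$, hence $\limsup_{z\to\zeta}\abs{\psi(z)}\le 1$, for every $\zeta\in\partial B$. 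Now $\{z\in B:\abs{\psi(z)}>1\}$ must be empty: on each of its connected components $\psi$ is holomorphic, while its boundary limsup is at most $1$ (by continuity of $\psi$ at boundary points lying in $B$ and by the previous estimate at points of $\partial B$), so the maximum principle forces $\abs{\psi}\le1$ there, a contradiction. Thus $u\le h$ on $B$, so $u$ is plurisubharmonic; and since $f\not\equiv0$, $u$ is not identically $-\infty$, whence $Z\subseteq\{u=-\infty\}$ is pluripolar. It is closed because $f$ is continuous.

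To conclude, observe that $\partial Z$ is closed, has empty interior (being the topological boundary of a closed set), and is pluripolar as a subset of $Z$. On $D\setminus\partial Z=(D\setminus Z)\cup\Int Z$ the map $f$ is holomorphic --- by hypothesis on $D\setminus Z$, and because $f\equiv0$ on $\Int Z$. As $f$ is locally bounded, Theorem~\ref{7} produces a holomorphic $\tilde f$ on $D$ with $\tilde f=f$ on $D\setminus\partial Z$; since $D\setminus\partial Z$ is dense and $\tilde f,f$ are continuous, $\tilde f\equiv f$, so $f$ is holomorphic on $D$. I expect the main obstacle to be the plurisubharmonicity of $\log\abs{f}$, i.e.\ the maximum-principle step preventing $f$ from bulging through its zero set; granted this, the pluripolarity of $Z$ together with Theorem~\ref{7} finishes the proof.
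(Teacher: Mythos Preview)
Your argument is correct. The paper itself does not give a proof of this theorem: it merely records the statement as a consequence of Theorem~\ref{7} and cites Gunning (Chapter~Q, Corollary~6) for details. What you have written is precisely the standard deduction that Gunning carries out --- establish that $\log|f|$ is plurisubharmonic by the harmonic-majorant/maximum-principle argument on complex lines, conclude that the zero set $Z$ is closed pluripolar, and then invoke Theorem~\ref{7}. Your handling of the minor technicalities (the reduction to $m=1$ via $D\setminus f_j^{-1}(0)\subseteq D\setminus Z$, the use of $\partial Z$ rather than $Z$ to accommodate a possible interior, and the boundary $\limsup$ since $G$ need not extend to $\overline{B}$) is clean. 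There is nothing to correct; you have simply supplied the proof the paper chose to outsource.
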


For a $C^1$-smooth mapping $F:D\to\mathbb C^m$, where $D\subset\mathbb C^n$ is a domain, we denote
\begin{equation*}
    F_z(p):=\left [\left(\frac{\partial F_j}{\partial z_k}(p)\right)\right]_{j=1,\ldots,m,k=1,\ldots,n}\in\mathbb C^{m\times n},\; p\in D.
\end{equation*}
Analogously, we denote $F_{\overline{z}}(p)\in\mathbb  C^{m\times n}$.

In our earlier considerations, we were dealing with $\mathbb R$-linear mappings $L:\mathbb C^n\to\mathbb C^m$ that are $\mathbb C$-linear or anti $\mathbb C$-linear.

When dealing with the Fr\'echet derivative $F'(z):\mathbb R^{2n}\to\mathbb R^{2m}$ that is, $\mathbb R$-linear mapping $\mathbb R^{2n}:=\mathbb C^n\to\mathbb R^{2m}:=\mathbb C^m$ the condition of $\mathbb C$-linearity of $F'(z)$ means $F_{\overline z}(p)=0\in\mathbb C^{m\times n}$ whereas the condition of anti $\mathbb C$-linearity means that $F_{z}(p)=0\in\mathbb C^{m\times n}$.

For clarity of the presentation we start with the proof of a result for $m=1$.
\begin{lemma}
\label{lemma:C}
Let $f:D\to \mathbb C$ be a $\mathcal{C}^1$ map such that for any $p\in D\subset\CC^n$ we have the equality $f_z(p)=0$ or $f_{\Bar {z}}(p)=0$. Then $f$ is either holomorphic or anti-holomorphic on $D$. 
\end{lemma}
\begin{proof}
    Define $h:D\to\mathbb C^{n}$ by the formula 
    \begin{equation*}
        h(p):=f_{z}(p)=\left(\frac{\partial f}{\partial z_1}(p),\cdots,\frac{\partial f}{\partial z_n}(p)\right), \;p\in D.
        \end{equation*}
    
    Note that $f$ is continuous on $D$ and holomorphic on an open set $D\setminus Z(h)$, where $Z(h)$ is the zero set of $h$. Since for any $p\in D$ we have $f_{z}(p)=0$ or $f_{\Bar{z}}(p)=0$, we get in  particular that $h(p)=f_z(p)\neq 0$ for  $p\in D\setminus Z(h)$. The assumptions of the lemma imply then that $f_{\Bar{z}}(p)=0$, $p\in D\setminus Z(h)$ or $f$ is holomorphic on $D\setminus Z(h)$. This implies that the mapping $h$ (=$f_z$) is holomorphic on $D\setminus Z(h)$, too. As $h$ is continuous on $D$ we get by Rado's theorem \ref{8} that $h$ is holomorphic on $D$. 
    
    Similarly define the mapping $g:D\to\mathbb C^{n}$ by the formula $g(p)=f_{\Bar{z}}(p)$, $p\in D$. Proceeding as with $h$ we get that $g$ is anti-holomorphic on $D$. 

    In particular, both functions $h$ and $g$ are real analytic on $D$ and for any $p\in D$ we get $h(p)=0$ or $g(p)=0$. Consequently, the identity principle for real analytic function implies that $f_z=h\equiv 0$ on $D$ or $f_{\Bar{z}}=g\equiv 0$ on $D$ and this gives holomorphicity or anti-holomorphicity of $f$ on $D$.  
\end{proof}

\begin{corollary}
\label{lemma:C^n}
Let $f:D\to \mathbb C^m$ be a $\mathcal{C}^1$ map such that for any $p\in D\subset\CC^n$ we have the equality $f_z(p)=0$ or $f_{\Bar {z}}(p)=0$. In other words, for any $p\in D$ if the derivative $f'(p)$ is $\CC$-linear or anti $\CC$-linear then $f$ is either holomorphic or anti-holomorphic, respectively, on $D$.   
\end{corollary}
\begin{proof}
    Define
    \begin{align*}
        A:=&\{p\in D:f_z(p)=0\}=\bigcap_{j=1}^m\{p\in D:(f_j)_z(p)=0\},\\ 
        B:=&\{p\in D:f_{\Bar{z}}(p)=0\}=\bigcap_{j=1}^m\{p\in D:(f_j)_{\Bar{z}}(p)=0\}.
    \end{align*}
To finish the proof, it is sufficient to show that $A$ or $B$ is $D$.
By assumption $A\cup B=D$ and $A$, $B$ are closed in $D$, so one of the sets $A$ or $B$ has non-empty interior. Assume without loss of generality that $A$ has non-empty interior. It follows from the lemma that the mapping $f$ is real analytic, as $f_z\equiv 0$ on $A$ by the identity principle we get $f_z\equiv 0$ on $D$, which shows that $f$ is anti-holomorphic on $D$ and finishes the proof.
\end{proof}

\section{Conclusion and Remarks}
The holomorphic rigidity of Kobayashi isometry in the case of a special 2-dimensional Lempert manifold has been established. However, the methods used here are not enough to conclude the holomorphic rigidity of Kobayashi isometry on general domains in $\CC^n$. The result in this article extends the phenomenon of rigidity previously observed in strictly convex domains \cite{Edi 2019}. The domain Diamond, Symmetrized bidisc, $D_{a,b}$ are Lempert domains, and assuming Conjecture \ref{conj_1} is true at least in the case of equidimensional domains, this raises a natural question on the holomorphic rigidity of Kobayashi isometry on Lempert domains which are not biholomorphic to product domains.

\subsection{Acknowledgment}
I would like to thank my supervisor, Prof. Dr. Hab. Włodzimierz Zwonek, for his continuous support and guidance during the preparation of this article. I am also grateful to Prof. Dr. Hab. Armen Edigarian for valuable discussions and for allowing me to use his ideas in the proofs of the main results presented here.

\end{document}